\newtheorem{prop}[equation]{Proposition}
\newtheorem{thm}[equation]{Theorem}
\newtheorem{cor}[equation]{Corollary}
\newtheorem{lem}[equation]{Lemma}
\theoremstyle{definition}
\newtheorem{rem}[equation]{Remark}
\newtheorem{exa}[equation]{Example}
\numberwithin{equation}{section}
\newcommand{\letbe}{\!\stackrel{\text{def}}{=}}
\newcommand{\hocolim}{\operatorname{hocolim}}
\newcommand{\bF}{\mathbb{F}}
\newcommand{\bQ}{\mathbb{Q}}
\newcommand{\bR}{\mathbb{R}}
\newcommand{\bZ}{\mathbb{Z}}
\newcommand{\cat}[1]{\mbox{\sc #1}}
\newcommand\Z{\bZ}
\newcommand\R{\bR}
\newcommand\Q{\bQ}
\newcommand \fp{\mathbb F_p}
\newcommand \lra{\longrightarrow}
\newcommand \ra{\rightarrow}
\def \larrow#1{\,\stackrel{#1}\lra\,}
\def \CA{\mathcal{A}}
\def \CC{\mathcal{C}}
\def \CF{\mathcal{F}}
\def \CK{\mathcal{K}}
\def \CO{\mathcal{O}}
\def \CU{\mathcal{U}}
\newcommand{\sdcu}{\!-\!\CU}
\newcommand{\catk}{\cat{cat}(K)}
\newcommand{\catcat}{\cat{cat}}
\newcommand{\nz}{\newline}
\newcommand{\catkx}{\catcat(K^\times)}
\newcommand{\op}{^{op}}
\newcommand{\catkxop}{\catkx^{op}}
\newcommand{\Top}{\cat{Top}}
\newcommand{\Alg}{\operatorname{Alg}}
\newcommand{\Dim}{\operatorname{dim}}
\newcommand{\rfgmod}{R_{fg}\!\!-\!\text{mod}}%
\newcommand{\link}{\operatorname{link}}
\newcommand{\Hom}{\operatorname{Hom}}
\newcommand{\Ext}{\operatorname{Ext}}
\newcommand{\depth}{\operatorname{depth}}
\newcommand{\homdim}{\operatorname{hom-dim}}
\newcommand{\st}{\operatorname{st}}
\newcommand{\Tot}{\operatorname{Tot}}
\newcommand{\coker}{\operatorname{coker}}
\newcommand{\id}{\operatorname{id}}
\newcommand{\Gl}{\operatorname{Gl}}
\newcommand{\fpv}{\fp[V]}
\begin{document}
\bibliographystyle{plain}
\vspace*{1cm}

\title{depth and homology decompositions}

\author{Dietrich Notbohm}
\address{Department of Mathematics, Vrije Universiteit Amsterdam, 
De Boolelaan 1081a, 1081 HV Amsterdam}
\email{notbohm@few.vu.nl}

\subjclass{55R40, 13A50, 13F55, 20J15, 55S10}
\keywords {
cohomology of groups, depth,  face ring,
homology decomposition, polynomial invariants,
Stanley-Reisner algebra}

\begin{abstract}
Homology decomposition techniques are
a powerful tool used in the analysis of the homotopy theory of (classifying)
spaces. The associated Bousfield-Kan spectral sequences involve higher
derived limits of the inverse limit functor. We study the impact of
depth conditions on the vanishing of these higher limits and apply our theory
in several cases. We will show that the depth of Stanley-Reisner algebras
can be characterized in combinatorial terms of the underlying
simplicial complexes, the depth of group cohomology in terms of
depth of group cohomology of centralizers of elementary abelian
subgroups, and the depth of polynomial invariants in
terms of depth of
polynomial invariants of point-wise stabilizer subgroups.
The latter two applications follow from the analysis of an algebraic version of centralizer
decompositions in terms of Lannes' $T$-functor.
\end{abstract}

\maketitle

\section{Introduction}

Homology decompositions are one of the most useful tools in the study of  the homotopy theory of topological spaces. Given a cohomology
theory $h^*$,
a homology decomposition for a space $X$
is, roughly speaking, a recipe to glue together spaces,
desirably of a simpler
homotopy type, such that the resulting space maps into $X$
by an $h^*$-isomorphism.

Technically, this is described by a (covariant) functor $F: \CC \lra \Top$
from a (discrete) category $\CC$ into the category $\Top$
of topological spaces together with a natural transformation
$F \lra 1_X$ from $F$ to the constant functor
$1_X$ which maps each object to $X$ and each morphism to the identity.
Passing to homotopy colimits, this natural transformation induces a map $f_X:\hocolim{}_{\CC}F \lra X$.

In special circumstances we can decide by purely algebraic
means, whether the map $f_X$  is an $h^*$-equivalence.
The cohomology of the homotopy colimit can be calculated
with the help of the Bousfield-Kan spectral sequence  \cite{boka}.
The $E_2$-page of this spectral sequence has the form
$$
E_2^{i,j}\, \letbe \lim{}_{\CC^{\op}}^i\, h^*(F)
$$
and converges towards $h^{i+j}(X)$.
If
the derived limits $\lim{}_{\CC^{\op}}^i\, h^*(F)$
of the inverse limit
$\lim{}_{\CC^{\op}}\, h^*(F)$ satisfy the equations
$$
\lim{}_{\CC^{\op}}^i\, h^*(F) \cong
\begin{cases}
h^*(X) & \text{ for } i=0 \\
0 & \text{ for } i\geq 1,
\end{cases}
$$
then $f_X:\hocolim{}_{\CC}\Phi \lra X$ is an $h^*$-equivalence.
The first isomorphism has to be  induced by
the natural transformation
$1_{h^*(X)} \lra h^*(F)$ respectively by the composition
$$
h^*(X) \larrow{h^*(f_X)} h^*(\hocolim{}_{\CC}\,  F)
\lra \lim{}_{\CC^{\op}}\, h^*(F).
$$
We would like to find conditions which ensure the vanishing of
the higher derived limits. In particular,
we are interested in the impact made by depth conditions.

Let $R$ be a  noetherian local  ring with maximal ideal $m_R$
and let
$M$ be a finitely generated $R$-module.
A sequence $x_1,...,x_r\in m_R$ is called regular on $M$,
if for all $i$, the element $x_i$ is not a zero divisor for the
quotient $M/(x_1,...,x_{i-1})M$. And the depth of $M$, denoted
by $\depth M$,
is the maximal length of a regular sequence.

Let $\Phi :\CC \lra \rfgmod$ be a functor, defined on a discrete category
$\CC$ and taking values in the category of
finitely generated $R$-modules. Let $1_M \lra \Phi$
be a natural transformation from the constant functor
$1_M$ to $\Phi$ inducing a $R$-linear map
$\rho:M \lra \lim{}_{\CC}\, \Phi$.
Our technical key result
Lemma \ref{key} relates depth conditions on $M$, depth conditions
 on the values of the functor, on the kernel and cokernel
 of $\rho$, and
the vanishing of the higher limits of $\Phi$.

We will apply Lemma \ref{key} in several cases, namely in the context of
Stanley-Reisner algebras, group cohomology, invariant theory
and algebras over the Steenrod algebra. In fact, with a little extra work,
one could prove that the first three examples are nothing but specialization of the last one. In all cases we will
describe the algebraic object in question as the inverse limit of a
functor defined on a finite category.
The first two application have a topological background in terms of homology decompositions.

\begin{exa} \label{ example1} (Stanley-Reisner algebras)
The main invariant to study the combinatorics of an abstract
simplicial complex $K$ is the associated face ring respectively
Stanley-Reisner algebra $\bF(K)$ ($\bF$ a field),
which is a quotient of a polynomial algebra
generated by the vertices of the complex.
Let $\catkx$ denote the category given by the non-empty faces
of $K$ Then, $\bF(K)$ can be
described as the inverse limit of a functor
$\Phi_K: \catkx \lra \bF\!\!-\!\!\Alg$ from the category $\catkx$
given by the
non-empty faces of $K$ into the category of $\bF$-algebras. In fact, for a
face $\sigma \in K$, we define $\Phi_K(\sigma)\letbe \bF (\st_K(\sigma))$, where
$\st_K(\sigma)$ denotes the star of $\sigma$ (see Section \ref{stanreis}).
This algebraic setting  has a topological realization.
There exists a space $c(K)$ such that $H^*(c(K);\bF)\cong \bF(K)$
as well as a functor $F_K :\catkxop \lra \Top$ defined by
$F_K(\sigma)\letbe c(\st_K(\sigma))$ and a natural transformation
$F_K\lra 1_{c(K)}$ such that
$H^*(F_K ;\bF)\cong \Phi_K$ and such that
$|K|\lra \hocolim_{\catkxop}\, F_K \lra c(K)$ is a cofibration \cite{notopint}(see also Section \ref{stanreis}).

Properties of commutative
algebra for $\bF(K)$
are reflected in the geometry or combinatorics of the underlying
simplicial complex. For example,  Reisner proved such a theorem
for the Cohen-Macaulay property \cite{reisner}. With our methods we are able
to reprove this result. Actually, we will get a slight generalization
and characterize
the depth of the face ring by the cohomological connectivity
of the links of the faces of $K$ (Theorem \ref{mainsr}). This generalization is already at least implicitly contained in Reisner's work (see also
\cite{brun}).
\end{exa}

\begin{exa} \label{example2} (group cohomology)
For a compact Lie group $G$ and a fixed prime p, we denote by
$\CF_p(G)$ the Frobenius or Quillen category of $G$ \cite{quillen}.
The objects are given by
the non-trivial elementary abelian $p$-subgroups $E\subset G$ and the
morphism are all monomorphism $E\lra E'$ induced by conjugation by an element of $G$. For an object $E\in \CF_p(G)$ we denote by $BC_G(E)$
the classifying space
of the centralizer $C_G(E)$ of $E$. These assignments
fit together to establish a functor
$F_G: \CF_p(G)^{\op} \lra \Top$. The inclusions $C_G(E) \subset G$ establish
a natural transformation $F_G \lra 1_{BG}$ and the map
$\hocolim_{\CF_p(G)^{\op}}\, F_G \lra BG$ induces an isomorphism
in mod-$p$ cohomology.
Passing to mod-$p$ cohomology gives rise to a functor
$\Phi_G : \CF_p(G) \lra \fp\!-\!\Alg$ defined by $\Phi_G(E)\letbe H^*(BC_G(E);\fp)$ and to a natural
transformation
$1_{H^*(BG;\fp)} \lra \Phi_G$, which induces an isomorphism
$H^*(BG;\fp) \lra \lim_{\CF_p(G)}\, \Phi_G$.
All these facts
can be found  in \cite{jm} and in \cite{dwcendec}.
We will show that $\depth H^*(BG;\fp)$ equals
the minimum of the numbers
$\depth H^*(BC_G(E);\fp)$ taken over all objects
in $\CF_p(G)$ (Theorem \ref{maingroupcohomology}).
\end{exa}

\begin{exa} \label{example3} (polynomial invariants)
For a finite dimensional  $\fp$-vector space $V$ and a representation
$G\lra \Gl(V)$ of a finite group $G$, we denote by $\fp[V]^G$ the ring of polynomial invariants of the $G$-action on the symmetric
algebra $\fp[V]$ of the dual of $V$.
Let $S$ denote the collection of all point wise stabilizer subgroups
of non trivial subspaces of $V$ whose order is divisible by $p$.
Let $\CO(G)$ denote the orbit category of $G$. That is objects
are given by quotients $G/H$, $H\subset G$ a subgroup,
and the morphisms by $G$-equivariant maps.
Let $\CO_S(G)\subset \CO(G)$ denote the full subcategory of the
orbit category, whose objects are  given by quotients $G/H$ such that
$H\in S$. There exists a functor $\Psi_G : \CO_S(G)^{\op} \lra \fp-\Alg$
and a natural transformation $1_{\fp[V]^G} \lra \Psi_G$.
If $p$ divides the order of $G$, then this natural transformation
establishes an
isomorphism $\fp[V]^G\larrow{\cong} \lim_{\CO_S(G)}\, \Psi_G$
\cite{nodecrep}.
In this case,
we will show that $\depth \fp[V]^G$ equals the minimum of the numbers
$\depth \fp[V]^H$ taken over all subgroups $H\subset G$ contained in $S$
(Theorem \ref{maininv}). If $p$ does not divide the order of $G$,
then $\fp[V]^G$ is Cohen-Macaulay, i.e. $\depth \fp[V]^G$ equals the
dimension of $V$ \cite{nesm}.

The inequality
$\depth \fpv^G\leq \depth \fpv^{H}$ for $H\in S$ was already proven by
Kemper \cite{kemper}
as well as by Smith (see \cite{nesm}).
\end{exa}

\begin{exa} \label{example4} (algebraic centralizer decomposition)
The second and third example and the first in the case of $\bF=\fp$ can be
interpreted as a specialization of a more general statement concerning
algebras over the Steenrod algebra. We can think of this last example
as an algebraic centralizer decomposition for algebras over the
Steenrod algebra.

Let $p$ be a fixed prime. Let $\CA$ denote the mod-p Steenrod
algebra and $\CK$ the category of unstable algebras over $\CA$.
Let $A$ be a noetherian object of $\CK$. i.e. $A$ is noetherian
just as algebra.
Rector constructed a category $\CF(A)$, whose objects $(E,\phi)$
consist of a non-trivial elementary abelian $p$-group $E$ and
a $\CK$-morphism $\phi:A\lra H^E\letbe H^*(BE;\fp)$
into the mod-$p$ cohomology of the classifying space $BE$ of $E$,
such that
$\phi$ makes $H^E$ to a finitely generated $A$-module
\cite{rector}.
With the help of Lannes' $T$-functor \cite{lannes},
one can construct
a functor $\Phi_A : \CF(A) \lra \CK$ and a natural transformation
$1_A\lra \phi_A$ respectively a map
$A \lra \lim{}_{\CF(A)}\, \Phi_A$ (see Section \ref{algcendec}).
We will establish a relation between the depth of $A$ respectively of the
functor values
and the vanishing of the higher limits (Theorem \ref{maincen}).
\end{exa}

Besides Lemma \ref{key} there is one further input in all four examples.
In all four cases it turns out that depth conditions on
$\bF(K)$, $H^*(BG;\fp)$, $\fp[V]^G$ or on $A$ are inherited to the
functor values of the associated functor. This is a consequence of Theorem \ref{depthincrease}, which states that an application of Lannes' $T$-functor
may only increase the depth.

The paper is organized as follows.
In the next section we establish our technical key result. In Section
\ref{depthtfunctor} we analyse the relation between depth and
Lannes' $T$-functor.
This passes the way to
discuss the case of algebraic centralizer decompositions
(Section \ref{algcendec}), the case of group cohomology
(Section \ref{groupcohomology}), the case of polynomial  invariants
(Section \ref{polyinv}) and, finally, the case
of Stanley-Reisner algebras (Section \ref{stanreis}).

\bigskip

\section{The key lemma} \label{keylemma}

Let $\bF$ be a field. To simplify the discussion and since all applications
are covered, we suppose that $R$ is either
a commutative local noetherian ring with maximal ideal $m_R$
and residue field $\bF$ or
or a connected commutative
graded  noetherian $\bF$-algebra. In the graded case, that is that
$R^i=0$ for $i<0$ and $R^0=\bF$. In particular,
$R$ is a also a graded local ring with residue field $\bF$. The
maximal ideal $m_R \letbe R^+ $ is the set of elements of positive degree.
In the graded case, the depth of $M$
is defined as the maximal length
of a sequence of homogeneous elements in $R$ which is regular on $M$.
Let $\rfgmod$
denote the category of finitely generated $R$-modules respectively the category of
finitely generated non-negatively graded $R$-modules.
Let $\Phi :\CC \lra \rfgmod$ be a covariant functor defined
on a discrete category $\CC$.
Let $M$ be an object of $\rfgmod$ and $1_M \lra \Phi$
a natural transformation from the constant functor
$1_M$  to $\Phi$.
This establishes a $R$-linear map $\rho : M \lra \lim_{\CC}\, \Phi$.
We measure the vanishing of the higher derived limits and
the deviation of $\rho$  being an isomorphism by the $R$-modules
$L^i\letbe L^i(M,\Phi)$, $i\geq -1$. That is
$L^{-1}$ is the kernel and $L^0$ the cokernel
of $\rho$. And  $L^i\letbe \lim{}_{\CC}^i\, \Phi_M$ for $i\geq 1$.

Higher derived limits of the functor $\Phi$ can be defined as  the
cohomology groups of a
certain cochain complex
$(C^*(\CC,\Phi), \delta)$ over $R$
(for details see \cite{oliver}).
For $n\geq 0$, the $R$-module $C^n\letbe C^n(\CC,\Phi)$
is  defined as
$$
C^n\letbe
\prod_{c_o\ra c_1 \ra... \ra c_n} F(c_n) \ .
$$
The differential $\delta : C^n \lra C^{n+1}$
is given by the alternating sum
$\sum_{k=0}^r (-1)^k \delta^k$ where $\delta_k$ is
defined on $u\in C^n$ by
$$
\delta^k(u)(c_0\ra\dots\ra c_{n+1})\letbe
\begin{cases}
u(c_0\ra\dots\ra\widehat{c}_k
\ra\dots\ra c_{n+1})&\text{for $k\neq n+1$}\\
\phi(c_n\ra c_{n+1})u(c_0\ra\dots
\ra c_n)&\text{for $k=n+1$}.
\end{cases}
$$
In particular, $\delta^k$ as well as $\delta$ are $R$-linear,
and all the groups $L^i$ are in a natural way finitely generated
$R$-modules.

We fix all the above notation through out this section.
We say that the sequence $L^i$, $i\geq -1$, is almost trivial
if, for all $i\geq -1$, either $L^i=0$ or
$\depth L^i=0$. The following result is the key lemma for proving our main results.

\begin{lem} \label{key}
Let $\Phi : \CC \lra\rfgmod$ be a functor.
Suppose  that $\depth \Phi(c) \geq r$ for all objects $c\in \CC$
and that the sequence $L^i$, $i\geq -1$, is almost trivial.
Then, $\depth M \geq r$ if and only
if $L^i=0$ for $i\leq r-2$.
\end{lem}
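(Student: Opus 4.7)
The strategy is to exploit the augmented cochain complex
$$
\tilde D^\bullet:\ 0 \to M \xrightarrow{\rho} C^0(\CC,\Phi) \xrightarrow{\delta} C^1(\CC,\Phi) \xrightarrow{\delta} \cdots
$$
placed in degrees $-1,0,1,\ldots$, whose cohomology in degree $i$ is precisely $L^i$. Since each $C^n$ is a finite product of the $\Phi(c)$, all of depth $\geq r$, we have $\depth C^n \geq r$ for every $n$. Writing $B^i = \operatorname{im} \delta^{i-1} \subseteq Z^i = \ker \delta^i \subseteq C^i$ with $B^0 = \operatorname{im} \rho$, so that $L^i = Z^i/B^i$ for $i \geq 0$, the entire proof reduces to iterated application of the standard depth lemma
$$
\depth A \geq \min\{\depth B,\ \depth C+1\},\qquad \depth C \geq \min\{\depth A-1,\ \depth B\}
$$
to the short exact sequences $0 \to Z^i \to C^i \to B^{i+1} \to 0$ and $0 \to B^i \to Z^i \to L^i \to 0$.

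For the backward direction, suppose $L^i = 0$ for $i \leq r-2$. Then $B^i = Z^i$ for $0 \leq i \leq r-2$ and $B^0 \cong M$, producing short exact sequences $0 \to B^i \to C^i \to B^{i+1} \to 0$ for $0 \leq i \leq r-2$. Iterating $\depth B^i \geq \min\{r,\ \depth B^{i+1}+1\}$ downward reduces matters to showing $\depth B^{r-1} \geq 1$. For that, $0 \to Z^{r-1} \to C^{r-1} \to B^r \to 0$ yields $\depth Z^{r-1} \geq \min\{r,\ \depth B^r+1\} \geq 1$, and then $0 \to B^{r-1} \to Z^{r-1} \to L^{r-1} \to 0$ combined with almost-triviality of $L^{r-1}$ (giving $\depth L^{r-1}+1 \geq 1$, or $L^{r-1}=0$) forces $\depth B^{r-1} \geq 1$, hence $\depth M \geq r$.

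For the forward direction I induct on $r$. The base $r=1$ is immediate: $L^{-1} \subseteq M$, so $L^{-1} \neq 0$ together with almost-triviality would give $\Hom_R(\bF, L^{-1}) \neq 0$, whence $\Hom_R(\bF, M) \neq 0$, contradicting $\depth M \geq 1$. For $r \geq 2$, applying the inductive hypothesis to $\depth M \geq r-1$ yields $L^i = 0$ for $i \leq r-3$, which produces the short exact sequences $0 \to B^i \to C^i \to B^{i+1} \to 0$ for $0 \leq i \leq r-3$ with $B^0 \cong M$. Iterating $\depth B^{i+1} \geq \min\{\depth B^i-1,\ r\}$ forward from $\depth B^0 \geq r$ gives $\depth B^{r-2} \geq 2$; combined with $\depth Z^{r-2} \geq 1$ (from $0 \to Z^{r-2} \to C^{r-2} \to B^{r-1} \to 0$), the sequence $0 \to B^{r-2} \to Z^{r-2} \to L^{r-2} \to 0$ forces $\depth L^{r-2} \geq 1$. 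Almost-triviality then gives $L^{r-2} = 0$, completing the induction.

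The main obstacle is matching the two directions of depth propagation: the forward iteration loses one unit of depth per step, while the backward iteration gains one per step, so each direction must extract the final unit of depth from a single ``boundary'' short exact sequence involving $L^{r-1}$ (backward) or $L^{r-2}$ (forward). Almost-triviality is essential exactly there, as it supplies the arithmetic gap between $\depth L = 0$ and $\depth L = \infty$ needed to close each chain of inequalities.
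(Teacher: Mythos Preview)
Your proof is correct and takes a genuinely different route from the paper. The paper sets up the double complex $D^{*,*}=\Hom_R(Q_*,C^*)$, where $Q_*$ is a projective resolution of $\bF$, and plays the two associated spectral sequences against each other: the first collapses in low vertical degree because $\Ext^j_R(\bF,\Phi(c))=0$ for $j<r$, which forces the corresponding range of the second spectral sequence (whose $E_2$-terms are $\Ext^j_R(\bF,L^i)$) to vanish, and the almost-triviality hypothesis then kills the $L^i$ one column at a time. Your argument unpacks this same information at the level of the long exact $\Ext$-sequences, splicing the augmented complex into short exact sequences and iterating the depth lemma; the two ``boundary'' steps where you invoke almost-triviality correspond exactly to the edge of the vanishing wedge in the paper's second spectral sequence. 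What your approach buys is elementarity: no spectral sequences, just repeated use of the depth inequalities for a short exact sequence. What the paper's approach buys is that the bookkeeping is absorbed into the machinery, and it makes the generalisation in the subsequent remark (replacing $\depth L^i=0$ by $\depth L^i=s$) transparent as a shift of the vanishing region.

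One small remark: you write that each $C^n$ is a \emph{finite} product of the $\Phi(c)$, but the lemma as stated does not assume $\CC$ is finite. This does not affect your argument, since over a noetherian ring $\Ext^j_R(\bF,-)$ commutes with arbitrary products, so $\depth C^n\ge r$ holds regardless; but it would be cleaner to drop the word ``finite'' and note this.
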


The rest of this section is devoted to a proof of this lemma.
The proof is based on spectral sequences associated to
double complexes. To fix notation we recall the basic concept.
Details may be found in \cite{weibel}.

A double complex over $R$ or a differential bigraded
$R$-module $(D^{*,*}, d_h,d_v)$
is a bigraded $R$- module $D^{*,*}$
with two $R$-linear maps
$d_h : D^{*,*} \lra D^{*+1,*}$ and $ d_v:D^{*,*} \lra D^{*,*+1}$ of
bidegree
$(1,0)$ and $(0,1)$ such that $d_hd_h=0=d_vd_v$ and $d_hd_v+d_vd_h=0$.
We think of $d_h$ as the horizontal and of $d_v$ as
the vertical differential.
To each double complex $D^{*,*}$
we associate a total complex $Tot^*(D)$
which is  a differential graded $R$-module defined by
$Tot^n(D)\letbe \oplus_{i+j=n} D^{i,j}$
with differential $d\letbe d_h + d_v$.

If $B_*$ is a chain complex and $C^*$ a cochain complex over $R$
then $\Hom_R(B_*,C^*)$ can be made into a bigraded
$R$-module.
We define $D^{i,j}\letbe \Hom_R(B_{j},C^i)$,
$d_v\letbe \Hom_R(d_B, \id)$ and
$d_h\letbe (-1)^i \Hom_R(\id,d_C)$ for $d_h:D^{i,*} \lra D^{i+1,*}$.

For a double complex $(D^{*,*}, d_h,d_v)$, we can take horizontal or
vertical cohomology groups denoted by
$H^*_h(D^{*,*})$ and $H^*_v(D^{*,*})$.
The boundary maps $d_v$ and $d_h$ induce again boundary maps on
these cohomology groups. We can consider cohomology groups of the form
$H^*_h(H_v^*(D^{*,*}))$ and $H^*_v (H^*_h(D^{*,*}))$.

If $D^{i,j}=0$ for $i<0$ or $j<0$,
there exist two
spectral sequences converging towards $H^*(Tot(D),d)$.
In one case, we have
${}_I E_2^{i,j} = H^i_h(H^j_v(D))$ and in the other case
${}_{II}E_2^{i,j} = H_v^j(H^i_h(D))$.
In the first case the differentials on the $E_r$-page
have degree $(1-r,r)$ and in
the second case degree $(r,1-r)$.

{\it Proof of Lemma \ref{key}:}
Since $R$ is local,
the depth of a finitely generated $R$-module $N$ is characterized
by the smallest number $j$, such that $\Ext_R^j(\bF,N)\neq 0$
\cite[Theorem 4.4.8]{weibel}. Hence, we have to relate certain $\Ext$-groups.

If $r=0$ there is nothing to show since each $R$-module has depth
$\geq 0$. If $r=1$ we have an exact sequence
$$
0\lra \Ext^0_R(\bF,L^{-1}) \lra \Ext_R^0(\bF,M) \lra
\Ext_R^0(\bF,\bigoplus_{c\in C} M(c))
\cong \bigoplus_{c\in C} \Ext^0_R(\bF,M(c))=0.
$$
Hence, since $L^{-1}=0$ or $\depth L^{-1}=0$, we have
$\depth M \geq 1$ if and only if $\depth L^{-1}\geq 1$
if and only if $L^{-1}=0$. This proves the statement for $r=1$.

Now we assume that $\depth M \geq r\geq 2$. In particular,
$L^{-1}=0$. We want to show that $L^i=0$ for $i\leq r-2$.
Let $Q_*$ be a $R$-projective resolution of $\bF$ as $R$-module,
let $C^*\letbe C^*(\CC;\Phi)$ and let $D^{*,*}$ denote the
differential bigraded $R$-module $\Hom_R(Q_*,C^*)$. Since $D^{*,*}$ is concentrated in the first quadrant, both spectral sequences converge
towards $H^*(\Tot(D))$. We have
$$
{}_I E_2^{i,j} =H^i_h(H^j_v(D))\cong
\lim{}^i\, \Ext^j_R(\bF,\Phi),
$$
which vanishes for $j\leq r-1$.
On the other hand,
we have
$$
{}_{II} E_2^{i,j}= H^j_v(H^i_h(D))\cong
\begin{cases}
\Ext^j_R(\bF,L^i) & \text{ for } i\geq 1 \\
\Ext^j_R(\bF,\lim{}^0_{\CC}\, \Phi) & \text{ for } i=0.
\end{cases}
$$
The long exact sequence of $\Ext$-groups
for the short exact sequence
$$
0\lra M \lra \lim{}_{\CC}\, \Phi \lra L^0 \lra 0
$$
shows that $\Ext^i_R(\bF,\lim{}_{\CC}\, \Phi)\cong
\Ext^i_R(\bF,L^0)$ for $i\leq r-2$.

By degree reasons and since no differential starts or ends at
${}_{II} E_2^{0,0}\cong \Ext^0_R(\bF,L^0)$, this group has
to vanish. Since $L^{0}=0$ or $\depth L^0=0$, this shows that $L^0=0$,
that $M\cong \lim{}^0_{\CC}\, \Phi$ and that ${}_{II} E_2^{0,j}=0$
for $j<r$. We can repeat the argument successively for $i=1...,r-2$,
which implies that $L^i=0$ for $i\leq r-2$.
Since there might be a non-trivial differential
$$
{}_{II} E_2^{r-1,0}\cong \Ext^0(\bF,L^{r-1}) \lra
{}_{II} E_2^{0,r}\cong \Ext^r_R(\bF,M),
$$
we cannot repeat the argument further.

If $L^i=0$ for $i\leq r-2$,
the $E_2$-page of first spectral sequence is exactly the same,
but may fail the vanishing statement.
Since $M\cong \lim{}^0_{\CC}\, \Phi$, the $E_2$-page of the second spectral sequence satisfies the equations
$$
{}_{II} E_2^{i,j}\cong
\begin{cases}
\Ext^j_R(\bF,M) & \text{ for } i=0\\
0 & \text{ for } 1\leq i\leq r-2.
\end{cases}
$$
A similar degree argument shows that
$\Ext^j_R(\bF,M)=0$ for $j\leq r-1$ and that $\depth M\geq r$. This finishes the proof.
\qed

\begin{rem}
Lemma \ref{key} can be slighly generalized. If you assume that
either $L^i=0 $ or $\depth L^i=s$ for all $i$, then a modification of the above argument will show that
$\depth M \geq r$ if and only if $L^i=0$ for $i\leq r-s-2$.
\end{rem}

We also will apply Lemma \ref{key} in form of the following corollary.

\begin{cor} \label{litrivial}
If $L^i=0$ for all $i\geq -1$ then
$\depth M \geq \min\{\depth \Phi_M(c): c\in \CC\}$.
\end{cor}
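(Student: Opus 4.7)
The plan is to deduce the corollary as a direct specialization of Lemma \ref{key}. Set $r \letbe \min\{\depth \Phi_M(c) : c \in \CC\}$. Then by the definition of the minimum, $\depth \Phi_M(c) \geq r$ for every object $c \in \CC$, so the first hypothesis of Lemma \ref{key} is satisfied for this choice of $r$.

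Next I would verify the almost triviality hypothesis. This is immediate: since $L^i = 0$ for all $i \geq -1$ by assumption, the sequence $\{L^i\}_{i \geq -1}$ is almost trivial in the sense defined just before Lemma \ref{key} (the option ``$L^i = 0$'' is chosen at every index).

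With both hypotheses in place, Lemma \ref{key} asserts the equivalence ``$\depth M \geq r$ iff $L^i = 0$ for $i \leq r-2$.'' The right-hand condition holds vacuously because, by assumption, $L^i = 0$ for every $i \geq -1$. Hence $\depth M \geq r = \min\{\depth \Phi_M(c) : c \in \CC\}$, which is the desired inequality.

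There is no real obstacle here; the only subtlety is making sure the $r=0$ and $r=1$ edge cases of Lemma \ref{key} are covered, but in both cases the conclusion $\depth M \geq r$ follows trivially (every module has nonnegative depth, and for $r=1$ we use $L^{-1} = 0$ together with the $r=1$ case of the lemma). So the corollary is essentially a repackaging of Lemma \ref{key} in the clean special case where every obstruction module vanishes.
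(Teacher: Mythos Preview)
Your argument is correct and is exactly the intended one: the paper presents this corollary as an immediate consequence of Lemma~\ref{key}, and your derivation---choosing $r=\min\{\depth \Phi_M(c):c\in\CC\}$, noting that the vanishing of all $L^i$ makes the sequence almost trivial and the condition $L^i=0$ for $i\leq r-2$ automatic---is precisely how that deduction goes.
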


\bigskip
\section{Depth and the $T$-functor} \label{depthtfunctor}

To fix notation we recall some basic concepts from
$T$-functor technology. For more details see
\cite{schwartz} and \cite{lannes}

Let $p$ be a fixed prime. As already mentioned in the introduction,
$\CA$ denotes the mod-$p$ Steenrod algebra,
$\CU$ the category of unstable modules and $\CK$
the category of unstable algebras
over $\CA$.

For an elementary abelian $p$-group $E$
we denote by
$H^E\letbe H^*(BE;\fp)$ the mod-$p$ cohomology of
the classifying space $BE$.
The functor $\otimes H^E : \CU \lra \CU$ has a left
adjoint in $\CU$ denoted by
$T_E:\CU\lra \CU$. This functor is exact,
commutes with tensor products and
restricts therefore to a functor $T_E:\CK \lra \CK$
also denoted by $T_E$.
The adjoint of $\id : T_E(M) \lra T_E(M)$ establishes a map
$M\lra T_E(M)\otimes H^E$ and, projecting to the first
factor via the
augmentation $H^E \lra \fp$, a map
$e: M\lra T_E(M)$.

We can specialize further.
For an algebra $A\in \CK$
we denote by  $A\sdcu$ the
category of $A\odot \CA$-modules. An $A\odot \CA$-module is both,
an object $M\in \CU$ and an $A$-module such that the structure map
$A\otimes M\lra M$ is $\CA$-linear.  Here, $\CA$ acts
on the tensor product via the Cartan formula.
We also denote by $A_{fg}\sdcu\subset A\sdcu$ the full subcategory of
all objects which are finitely generated as $A$-modules.

If $M\in A\sdcu$,
then, since the functor $T_E$ commutes with tensor products, the
$\CU$-object $T_E(M)$ is actually an object of $T_E(A)\sdcu$.
And if $\phi : A \lra H^E$ is a $\CK$-map,  we define
$M(\phi)\letbe T_E(M;\phi)\letbe T_E(M)\otimes_{T^0_E(A)}\fp(\phi)$,
where $\fp(\phi)$ denotes the $T_E(A)$-module whose structure map
is induced by the adjoint of the $\CK$-map $\phi : A\lra H^E$.

Let $F(n)\in \CU$ denote the free object in $\CU$ generated by one element
in degree $n$. The module $F(n)$ is defined by the equation
$\Hom_{\CU}(F(n),M)\letbe M^n$ for any object $M\in \CU$.
Here, $M^n$ denotes the set of elements of $M$ of degree $n$
(for details see \cite{schwartz}).
For any object $M\in A\sdcu$, we have
$\Hom_{A\sdcu}(A\otimes_{\fp}F(n), M)=Hom_{\CU}(F(n),M)=M^n$.
Therefore, there exists an $A\sdcu$-epimorphism $P\lra M$ such that
$P$ is free as $A$-module as well as a resolution
$$
\lra P_n \lra P_{n-1}\lra ...\lra P_0\lra M \lra 0
$$
of $A\sdcu$-modules, such that each $P_i$ is free as $A$-module.

In the following we assume that $A$ is
noetherian as an algebra, which
simplifies the discussion. In particular,
$\Hom_{\CK}(A, H^E)$ is a finite set and
$T^0_E(A)\cong \fp^{\Hom_{\CK}(A,H^E)}$ is a finite dimensional
vector space isomorphic to the dual of $\Hom_{\CU}(A,H^E)$.
And, if $M\in A\sdcu$, then
$$
T_E(M)\cong T_E(M)\otimes_{T^0(A)}T^0(A) \cong
\bigoplus_{\phi \in \Hom_{\CK}(A,H^E)} M(\phi).
$$
The above map $e:M \lra T_E(M)$ respects all these additional structures
and, restricting to  a particular summand of $T_E(M)$,
establishes a map
$e_\phi: M \lra M(\phi)$. This is a map of $A$-modules,
where $A$ acts on $M(\phi)$ via
the $\CK$-map $A\lra A(\phi)$, and therefore a
$A\sdcu$-morphism. In fact, if $A\in \CK$ is noetherian and $A\lra H^E$ a
$\CK$-map, then the functor $T_E(-,\phi)$ restricts to a functor
$T_E(-,\phi): A_{fg}\sdcu \lra A_{fg}\sdcu$ \cite{henn}. In particular, if $A$ is noetherian, so is $A(\phi)\letbe T_E(A,\phi)$.

In this section we want to prove the following theorem.

\begin{thm} \label{depthincrease}

Let $A\in \CK$ be a noetherian connected algebra
and $M\in A_{fg}\sdcu$. Let $E$ be an elementary
abelian group and let $\phi : A\lra H^E$ be a $\CK$-map.
Then,
$\depth M\leq \depth M(\phi)$.
\end{thm}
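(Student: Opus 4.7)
The plan is to recast $\depth M \leq \depth M(\phi)$ as an $\Ext$-vanishing statement and to exploit the exactness of Lannes' $T$-functor together with its compatibility with free modules. Using the standard characterisation $\depth_R N = \min\{i : \Ext^i_R(\fp, N) \neq 0\}$, valid for any finitely generated graded module $N$ over a connected graded noetherian $\fp$-algebra $R$, the inequality $\depth_A M \leq \depth_{A(\phi)} M(\phi)$ is equivalent to the implication that $\Ext^i_A(\fp, M) = 0$ for all $i < r$ forces $\Ext^i_{A(\phi)}(\fp, M(\phi)) = 0$ for all $i < r$.

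To set up the comparison I would choose a resolution $Q_\bullet \to \fp$ in $A_{fg}\sdcu$ with each $Q_n = A \otimes W_n$ free over $A$ and $W_n \in \CU$ finitely generated, as in the discussion preceding the theorem. Applying the exact functor $T_E$, which commutes with tensor products and satisfies $T_E(\fp) = \fp$, I obtain an exact sequence $T_E(Q_\bullet) \to \fp$ over $T_E(A)$ in which each $T_E(Q_n) = T_E(A) \otimes T_E(W_n)$ is free over $T_E(A)$. Since $T^0_E(A) \cong \fp^{\Hom_\CK(A, H^E)}$ is semisimple, the base-change functor $-\otimes_{T^0_E(A)} \fp(\phi)$ is exact; applying it yields a resolution $Q_\bullet(\phi) \to \fp$ over $A(\phi)$ whose terms $Q_n(\phi) = A(\phi) \otimes T_E(W_n)$ are free over $A(\phi)$.

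From these parallel resolutions one reads off $\Ext^i_A(\fp, M) = H^i \Hom_A(Q_\bullet, M)$ and $\Ext^i_{A(\phi)}(\fp, M(\phi)) = H^i \Hom_{A(\phi)}(Q_\bullet(\phi), M(\phi))$. The heart of the argument is to construct a natural map between these Hom complexes and verify that it is injective on cocycles modulo coboundaries, so that vanishing of the source for $i < r$ forces vanishing of the target. Using the identifications $\Hom_A(A \otimes W_n, M) \cong \Hom_\fp(W_n, M)$ (and the analogue over $A(\phi)$) together with the defining adjunction $\Hom_\CU(T_E X, Y) \cong \Hom_\CU(X, Y \otimes H^E)$ of $T_E$, such a natural transformation can be built and its kernel analysed.

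The main obstacle is precisely this injectivity step: $\Ext^i_A$ is computed from all $A$-linear maps between graded $\fp$-vector spaces, whereas $T_E$ is functorial only for $\CU$-morphisms, so arbitrary cocycles cannot be transported along $T_E$ in a naive way. The workaround is to decompose an $A$-linear cocycle in terms of its values on the $\CU$-generators of $W_n$ and to lift those generator-values to $A(\phi)$-linear cocycles using the canonical map $e: M \to T_E(M)$ followed by projection to the $\phi$-summand $M(\phi)$. Once this injectivity is proven, the vanishing of $\Ext^i_A(\fp,M)$ for $i < r$ propagates to the vanishing of $\Ext^i_{A(\phi)}(\fp, M(\phi))$ for $i < r$, and the theorem follows.
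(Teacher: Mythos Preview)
Your proposal has a genuine gap at its central step. You correctly restate the goal as the implication $\Ext^i_A(\fp,M)=0$ for $i<r$ $\Rightarrow$ $\Ext^i_{A(\phi)}(\fp,M(\phi))=0$ for $i<r$. But for a comparison map to transfer \emph{vanishing} in that direction you need either a \emph{surjection} $\Ext^i_A(\fp,M)\twoheadrightarrow\Ext^i_{A(\phi)}(\fp,M(\phi))$ or an \emph{injection} in the opposite direction; an injection from the vanishing side tells you nothing about the other. Your ``workaround'' explicitly builds a map from $A$-cocycles to $A(\phi)$-cocycles via $e_\phi:M\to M(\phi)$, which is the wrong direction for injectivity to be useful, and you give no argument for surjectivity. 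Worse, the map you sketch lands in $\Hom_{\fp}(W_n,M(\phi))$ rather than $\Hom_{\fp}(T_E(W_n),M(\phi))$, so it is not even a map between the two Hom complexes you set up; the $T_E$-adjunction only compares $\CU$-morphisms, which (as you yourself note) is not what computes $\Ext_A$. The final paragraph is therefore a hope, not a proof.

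The paper avoids this comparison entirely. Using Bourguiba--Zarati there is a $\CK$-map $D\to A$ from a power of a Dickson algebra with $A$ (hence $M$) finite over $D$; depth is unchanged by passing to $D$, and a direct-sum splitting (Lemma~\ref{algextension}) reduces to the component $M(\phi\alpha)$ over $D$. Since $D$ is polynomial, the Auslander--Buchsbaum equality converts the desired depth inequality into $\homdim_D M\geq \homdim_D M(\phi)$. That follows immediately because $T_E(-,\phi)$ is exact and takes free $D$-modules to free $D(\phi)$-modules (Proposition~\ref{homdimdecrease}), and $D(\phi)$ is itself polynomial and free over $D$. The reduction to a polynomial base is the missing idea: it replaces your delicate and unresolved $\Ext$-comparison by a one-line count of the length of a finite free resolution.
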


The proof of this theorem is very similar to an argument used by Smith
in a different context (see \cite{nesm}).

Using the Auslander Buchsbaum equation (e.g see \cite{weibel})
we will reduce this statement to a claim about homological dimension.

Let $A$ be a connected commutative graded algebra and $M$ a graded $A$-module.
We say that $M$ has finite homological dimension if there exists a
finite resolution
$$
0 \lra P_r \lra P_{r-1} \lra ... \lra P_0 \lra M \lra 0
$$
of graded $A$-modules, such that $P_i$ is projective for $0\leq i \leq r$.
We say that $\homdim_A M=t$ if the shortest projective resolution has length $t$.

\begin{prop} \label{homdimdecrease}
Let $A$ be a noetherian object of  $\CK$, $M$ and object of $A\sdcu$, and
$\phi :A\lra H^E$ a $\CK$-map.
If $M$ has finite homological dimension over $A$,
then $M(\phi)$ has finite homological dimension over $A(\phi)$and
$\homdim_A M \geq \homdim_{A(\phi)} M(\phi)$.
\end{prop}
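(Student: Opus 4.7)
\emph{Proof plan.} The plan is to argue by induction on $t := \homdim_A M$, trivially done when $t = \infty$. The inductive step reduces the proposition to the base case $t = 0$: namely, if $N \in A_{fg}\sdcu$ is $A$-free, then $T_E(N, \phi)$ is $A(\phi)$-free.

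For the inductive step ($t \geq 1$), choose a short exact sequence
$$0 \to N \to P \to M \to 0$$
in $A\sdcu$ with $P$ of the form $A \otimes V$ for some $V \in \CU$; such a $P$ exists by the $A\sdcu$-free cover construction recalled before the statement. Then $P$ is $A$-free and $\homdim_A N \leq t - 1$. Apply the exact functor $T_E(-, \phi)$ to obtain an exact sequence
$$0 \to N(\phi) \to P(\phi) \to M(\phi) \to 0$$
in $A(\phi)\sdcu$. Using that $T_E$ commutes with tensor products, the identification $T_E(A \otimes V) = T_E(A) \otimes T_E(V)$ and subsequent restriction to the $\phi$-summand yield $P(\phi) = A(\phi) \otimes T_E(V)$, which is visibly $A(\phi)$-free. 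The induction hypothesis gives $\homdim_{A(\phi)} N(\phi) \leq t - 1$, so $\homdim_{A(\phi)} M(\phi) \leq t$, as desired.

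The hard part, where I expect the main technical obstacle to sit, is the base case. Since $N \in A_{fg}\sdcu$ is $A$-projective and $A$ is graded connected Noetherian, $N$ is $A$-free of finite rank. My plan, following Smith's treatment in \cite{nesm}, is to choose a homogeneous $A$-basis of $N$, transport it via the natural map $e_\phi: N \to T_E(N,\phi) = T_E(N) \otimes_{T^0_E(A)} \fp(\phi)$, and verify that the resulting family is an $A(\phi)$-basis of $T_E(N,\phi)$. The delicate point is that this basis is only an $A$-module basis of $N$ and is not canonical with respect to the $\CA$-action, so establishing both spanning and $A(\phi)$-linear independence in $T_E(N,\phi)$ requires careful bookkeeping of the $T^0_E(A) = \fp^{\Hom_\CK(A, H^E)}$-action, exploiting again that $T_E$ commutes with tensor products and that $T_E(A) \otimes_{T^0_E(A)} \fp(\phi) = A(\phi)$ by definition.
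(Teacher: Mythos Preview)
Your reduction to the base case is essentially the paper's argument: the paper takes a full $A\sdcu$-resolution $0\to P_t\to\cdots\to P_0\to M\to 0$ with each $P_i$ free as an $A$-module, applies the exact functor $T_E(-,\phi)$, and concludes. Unrolling your induction yields the same resolution. The only substantive difference is that the paper does not attempt to prove freeness preservation; it simply invokes the Dwyer--Wilkerson result \cite{dwkaehler} that $T_E(-,\phi)$ carries $A$-free objects of $A\sdcu$ to $A(\phi)$-free modules.

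Your plan for the base case, however, has a genuine gap. You propose to pick a homogeneous $A$-basis of the $A$-free module $N$ and push it forward along $e_\phi:N\to N(\phi)$ to obtain an $A(\phi)$-basis. But this cannot work even in the easy situation you already handled in the inductive step: for $P=A\otimes V$ with $V\in\CU$, an $A$-basis of $P$ is an $\fp$-basis of $V$, whereas $P(\phi)=A(\phi)\otimes T_E(V)$ is $A(\phi)$-free on an $\fp$-basis of $T_E(V)$. In general $\dim_{\fp}T_E(V)\neq\dim_{\fp}V$, so the ranks differ and $e_\phi$ cannot possibly carry one basis to the other. For a general $A$-free $N$ in $A\sdcu$ the situation is no better: the $A$-module structure alone does not control $T_E(N)$, because $T_E$ is a functor on $\CU$, not on $A$-modules, and an $A$-module isomorphism $N\cong A^k$ need not be $\CA$-linear. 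The argument in \cite{dwkaehler} that $T_E$ preserves $A$-freeness is genuinely different and more delicate; you should either cite it, as the paper does, or replace your basis-transport sketch with a correct argument.
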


\begin{proof}
Let $\homdim_A M=t$. We can choose a resolution
$$
0\lra P_t \lra P_{t-1}\lra ...\lra P_0 \lra M \lra 0
$$
of $A\sdcu$-moduls, such that $P_i$ is free as
$A$-module for $0\leq i\leq t-1$.
Since $\homdim_A M=t$, this implies that $P_t$ is a projective
$A$-module. And since $A$ is  connected and noetherian, in particular a graded
local ring ring, $P_t$ is a free $A$-module.
%
%

Applying the exact  functor $T_E(-,\phi)$ establishes an
exact $A(\phi)\sdcu$-sequence
$$
0\lra P_t(\phi) \lra P_{t-1}(\phi) \lra ... \lra P_0(\phi) \lra M(\phi)\lra 0.
$$
Lannes $T$-functor maintains freeness properties \cite{dwkaehler}.
Hence, for all $i$, the module $P_i(\phi)$ is a free $A(\phi)$-module,
which proves the claim.
\end{proof}

The proof of Theorem \ref{depthincrease}  needs some further preparation.
Let $\alpha : A\lra B$ and $\phi: A\lra H^E$  be  $\CK$-maps,
$A$ and $B$ noetherian.
We denote by $e(\phi,\alpha)$ the set of all
$\CK$ maps $\psi:B\lra H^E$ such that $\psi\alpha=\phi$.
This is a finite set.

\begin{lem} \label{algextension}
Let $\alpha : A\lra B$ and $\phi : A\lra H^E$ as above. For each
object $M$ of $B\sdcu$, the $T$-functor induces an $A\sdcu$-isomorphism
$M(\phi)\cong \bigoplus_{\psi\in e(\phi,\alpha)} M(\psi)$.
\end{lem}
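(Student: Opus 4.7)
The strategy is to start from the canonical idempotent decomposition of $T_E(M)$ viewed as a $B\sdcu$-module, and then read it off from the point of view of $T^0_E(A)$ via the ring map $T^0_E(\alpha):T^0_E(A)\lra T^0_E(B)$ induced by $\alpha$. Because $B$ is noetherian, the identification $T^0_E(B)\cong \fp^{\Hom_\CK(B,H^E)}$ yields a $B\sdcu$-splitting
$$
T_E(M)\ \cong\ \bigoplus_{\psi\in \Hom_\CK(B,H^E)} M(\psi),
$$
in which the summand $M(\psi)$ is the image of the idempotent $e_\psi\in T^0_E(B)$ and satisfies $M(\psi)=T_E(M)\otimes_{T^0_E(B)}\fp(\psi)$.

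Next I would transport this splitting through the $\fp$-algebra map $T^0_E(\alpha)$. The key formal point, obtained by dualizing the finite set map
$$
\alpha^{\ast}:\Hom_\CK(B,H^E)\lra \Hom_\CK(A,H^E),\qquad \psi\longmapsto \psi\alpha,
$$
is that $T^0_E(\alpha)$ sends the idempotent $e_\phi\in T^0_E(A)$ to $\sum_{\psi\in e(\phi,\alpha)} e_\psi\in T^0_E(B)$. Restricting scalars along $T^0_E(\alpha)$, the $T^0_E(A)$-action on each summand $M(\psi)$ therefore factors through the character of $T^0_E(A)$ corresponding to the composite $\psi\alpha$.

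The conclusion follows by applying the exact functor $-\otimes_{T^0_E(A)}\fp(\phi)$ to the above decomposition. The summand indexed by $\psi$ becomes $M(\psi)$ when $\psi\alpha=\phi$ (the two $T^0_E(A)$-actions agree on a generator of $\fp(\phi)$) and vanishes otherwise, giving
$$
M(\phi)\ =\ T_E(M)\otimes_{T^0_E(A)}\fp(\phi)\ \cong\ \bigoplus_{\psi\in e(\phi,\alpha)} M(\psi).
$$
Naturality of $T_E$ makes the resulting isomorphism $\CA$-linear, and for each $\psi\in e(\phi,\alpha)$ the map $T_E(\alpha)$ induces a $\CK$-morphism $A(\phi)\lra B(\psi)$ through which the $A$-action on $M(\psi)$ agrees with the $A$-action coming from the $B$-module structure pulled back along $\alpha$; together these compatibilities promote the isomorphism to one in $A\sdcu$.

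The step I expect to be the main obstacle is verifying the explicit formula $T^0_E(\alpha)(e_\phi)=\sum_{\psi\in e(\phi,\alpha)}e_\psi$. This is essentially a consequence of Lannes' computation of $T_E(H^{E'})$ together with the interpretation of the points of $T^0_E(A)$ as $\CK$-maps $A\lra H^E$, which makes the identification $T^0_E(A)\leftrightarrow \fp^{\Hom_\CK(A,H^E)}$ natural in $A$; once this identity is in hand the rest of the argument is formal.
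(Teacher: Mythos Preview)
Your proof is correct and follows essentially the same route as the paper. The paper's argument is the one-line chain
\[
M(\phi)\cong T_E(M)\otimes_{T^0_E(B)}\bigl(T^0_E(B)\otimes_{T^0_E(A)}\fp(\phi)\bigr)\cong \bigoplus_{\psi\in e(\phi,\alpha)} T_E(M)\otimes_{T^0_E(B)}\fp(\psi),
\]
using $T^0_E(B)\otimes_{T^0_E(A)}\fp(\phi)\cong\bigoplus_{\psi\in e(\phi,\alpha)}\fp(\psi)$; your idempotent formula $T^0_E(\alpha)(e_\phi)=\sum_{\psi\in e(\phi,\alpha)}e_\psi$ is exactly the dual statement of this decomposition, so the two arguments differ only in packaging.
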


\begin{proof}
The algebra $A$ acts on $B(\psi)$ via the composition
$A\lra B \lra B(\psi)$ of $\CK$-maps.
Since
$T^0(B)\otimes_{T^0(A)}\fp(\phi)
\cong \bigoplus_{\psi\in e(\phi,\alpha)}\fp(\psi)$
as algebras, we have
$$
\begin{array}{rcl}
M(\phi)& \cong & T(M)\otimes_{T^0(A)}\fp(\phi)\\
& \cong & T(M)\otimes_{T^0(B)}T^0(B)\otimes_{T^0(A)}\fp(\phi) \\
& \cong & \bigoplus_{\psi \in e(\phi,\alpha)} T(M)\otimes_{T^0(B)}\fp(\psi)\\
& \cong & \bigoplus_{\psi \in e(\phi,\alpha)} M(\psi).
\end{array}
$$
And this composition is obviously a map of $A$-modules.
\end{proof}

The Dickson algebra
$D_r\cong \fp[d_1,...d_r]\cong \fp[x_1,...,x_r]^{\Gl(r,\fp)}$
is a polynomial algebra isomorphic to the invariants of
the  general linear group $\Gl(r,\fp)$ acting on the polynomial algebra
$\fp[x_1,...,x_r]$ with $r$-generators, which we
give the degree 2.
Then, this algebra carries an action of $\CA$ which inherits an action to $D_r$.  More details can be found in
\cite{nesm}.

For a positive integer $l$
we denote by $D_r^l\subset D_r$ the subalgebra of all $p^l$-powers of
elements of $D_r$. Then, $D^l_r\cong \fp[d_1^{p^l},...,d_r^{p^l}]$
is again a polynomial algebra , in particular a noetherian object
of $\CK$ \cite{schwartz}.

{\it Proof of Theorem \ref{depthincrease}:}
Since $A$ is noetherian, there exist integers $n,l\geq 1$
and a $\CK$ map $\alpha: D\letbe D_n^l \lra A$ making $A$ into a finitely generated $D$-module \cite[Appendix]{boza}.
Hence, $M$ is a finitely generated $D$-module as well,
and for the calculation of $\depth M$ we can consider $M$ as an $D$-module
\cite{serre}.
On the other hand, for each $\CK$-map $\phi: A\lra H^E$,
we have
$M(\phi\alpha)\cong \bigoplus_{\psi\in e(\phi\alpha,\alpha)} M(\psi)$
as $D$-modules.
Since $\phi\in e(\phi\alpha,\alpha)$, we have
$\depth M(\phi)\geq \depth M(\phi\alpha)$. Hence,
we can and will assume
that $A=D$.

Since $D$ is a polynomial algebra, every finitely generated $D$-module
has finite homological dimension (see \cite{weibel}). By
Proposition \ref{homdimdecrease} we have
$\homdim_D M\geq \homdim_{D(\phi)} M(\phi)$.
Since $D$ is a polynomial algebra, the same
holds for $D(\phi)$ and  $D(\phi)$ is a free  finitely generated $D$-module
\cite{dwkaehler}. In particular, every projective $D(\phi)$-module is a projective $D$-module and
$\homdim_{D(\phi)} M(\phi)\geq \homdim_D M(\phi).$
By the Auslander-Buchsbaum equation (see \cite{weibel}), we have
$\depth N=\depth D -\homdim_D N$ for any finitely generated $D$-module $N$.
Proposition \ref{homdimdecrease}
shows that $\depth M\leq \depth M(\phi)$.
\qed.

\bigskip

\section{Depth and the algebraic centralizer decomposition}
\label{algcendec}

We use the same notation as in the last section. For a
noetherian object $A\in \CK$, Rector defined a category
$\CF(A)$ as follows \cite{rector}.
The objects are given by pairs $(E,\phi)$, where
$E$ is a non-trivial elementary abelian $p$-group and where
$\phi : A \lra H^E$ is a $\CK$-map such that
$H^E$ becomes a finitely generated
$A$-module. And a morphism $\alpha : (E,\phi)\lra (E',\phi')$
is a monomorphism
$i_\alpha: E \lra E'$ such that $\phi H^*(Bi_\alpha)=\phi'$.
Since $A$ is noetherian, this category is finite.

Let $M$ be an object of $A\sdcu$.
Since the $T$-functor its natural with respect to homomorphism
$E\lra E'$, the maps $e_\phi : M\lra M(\phi)$ are compatible with
all morphisms in the
category $\CF(A)$. This defines a covariant functor
$$
\Phi_M : \CF(A) \lra A\sdcu: M \mapsto M(\phi)
$$
as well as a natural transformation $1_M\lra \Phi_M$.
In particular, there exists  an $A\sdcu$-morphism
$$
\rho_{M} : M\lra \lim{}_{\CF(A)}\, M(\phi).
$$
For a topological interpretation of these algebraic data and
their relation to centralizer decompositions of compact Lie groups
see \cite{dwcendec} (see also Section \ref{groupcohomology}).

As in Section \ref{keylemma}, we define
$$
L^i(M)\letbe
\begin{cases}
\lim{}^i_{\CF(A)}\, \Phi_M & \text{ for } i\geq 1\\
\coker(\rho_M) & \text{ for } i=0\\
\ker(\rho_M) & \text{ for } i=-1
\end{cases}
$$

\begin{thm} \label{maincen}
Let $A\in \CK$ be a connected noetherian algebra and
let $M$ be an object of $A_{fg}\sdcu$.
 Then,
$\depth M\geq r$
if and only if the following two conditions hold:
\nz
(i) For all objects $(E,\phi)\in \CF(A)$, we have
$\depth M(\phi)\geq r$.
\nz
(ii)
$L^i(M)=0$ for $i\leq r-2$
\end{thm}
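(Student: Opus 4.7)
The plan is to apply Lemma \ref{key} with $R=A$ and $\Phi=\Phi_M$. The framework fits: $A$ is noetherian by hypothesis, and by Henn's theorem (recalled in Section \ref{depthtfunctor}) the functor $T_E(-,\phi)$ preserves $A_{fg}\sdcu$, so each $\Phi_M(E,\phi)=M(\phi)$ is a finitely generated $A$-module; the map $\rho_M$ and the modules $L^i(M)$ are exactly the $L^i$ of the key lemma.

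The main work is to establish that the sequence $L^i(M)$, $i\geq -1$, is almost trivial over $A$: each $L^i(M)$ is either zero or of depth $0$. The higher limits, kernel and cokernel inherit an $\CA$-action from $A\sdcu$, so each $L^i(M)\in A\sdcu$. The key claim I would pursue is that each $L^i(M)$ is locally finite in $\CU$: after applying any non-trivial $T_{E_0}$ to the pair $(M,\Phi_M)$, one uses the splitting of Lemma \ref{algextension} together with exactness of $T_{E_0}$ to show that $T_{E_0}$ kills the higher limits, the kernel and the cokernel of $\rho_M$. By the standard $T$-functor characterisation of local finiteness (vanishing of $T_{E_0}$ for all non-trivial $E_0$), this forces $L^i(M)$ to be locally finite, and a non-zero locally finite object of $A\sdcu$ is annihilated by a power of the augmentation ideal of $A$ and hence has depth $0$.

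With almost triviality in hand the theorem follows easily. For the $(\Leftarrow)$ direction, (i) is exactly the depth hypothesis of Lemma \ref{key} and (ii) is the vanishing condition $L^i=0$ for $i\leq r-2$, so the lemma gives $\depth M\geq r$. For the $(\Rightarrow)$ direction, Theorem \ref{depthincrease} yields $\depth M(\phi)\geq\depth M\geq r$ for every $(E,\phi)\in\CF(A)$, which is (i); combined with almost triviality, Lemma \ref{key} then turns the assumption $\depth M\geq r$ into the vanishing $L^i(M)=0$ for $i\leq r-2$, which is (ii). The genuine obstacle is the almost-triviality step; if the $T_{E_0}$-vanishing argument proves delicate, an alternative route is a dimension-shifting induction along a free resolution $P_\bullet\to M$ in $A\sdcu$, exploiting exactness of $T_E$ and its preservation of $A$-freeness (as used in the proof of Proposition \ref{homdimdecrease}).
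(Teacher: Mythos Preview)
Your proposal is correct and follows essentially the same route as the paper: verify the hypotheses of Lemma~\ref{key} for $\Phi_M$, use Theorem~\ref{depthincrease} for condition~(i), and deduce both directions from the key lemma. The only difference is at the almost-triviality step: the paper does not argue it out but simply invokes Henn~\cite{henn}, who shows directly that each $L^i(M)$ is a finite-dimensional $\fp$-vector space (hence zero or of depth $0$); your local-finiteness sketch via $T_{E_0}$-vanishing is precisely the mechanism behind Henn's result, so you are reproving what the paper cites.
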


\begin{proof}
In \cite{henn}, Henn has shown that the $A$-modules $L^i$ are finite dimensional
$\fp$-vector spaces. In particular, either $L^i(M)=0$ or
$\depth L^i=0$ for all
$i\geq -1$.
That is, the sequence $\{L^i: i\geq -1\}$ is almost trivial. Moreover, by Theorem \ref{depthincrease},
we have $\depth M(\phi) \geq \depth M$. An application of
Lemma \ref{key} finishes the proof.
\end{proof}
\bigskip
\section{Depth and group cohomology} \label{groupcohomology}

Let $G$ be a compact Lie group with classifying space $BG$. And
let $p$ be a fixed prime. In this section,
cohomology is always taken  with $\fp$-coefficients
and $H^*(-)\letbe H^*(-;\fp)$.

As in the introduction we denote by $\CF_p(G)$ the Quillen category of
$G$ and by
\nz
$F_G : \CF_p(G)^{\op} \lra \Top$ the functor mapping
$E$ to $BC_G(E)$. The natural transformation $F_G \lra 1_{BG}$
establishes a map
$\hocolim_{\CF_p(G)^{\op}}\,  F_G \lra BG$.
Jackowski and McClure showed that this map induces
an isomorphism in mod-$p$ cohomology.
In fact they showed that
$H^*(BG)\cong \lim{}_{\CF_p(G)}\, H^*(F_G)$,
that $\lim{}^i_{\CF_p(G)}\, H^*(F_G)=0$ for $i\geq 1$ \cite{jm}.

Passing to classifying spaces and mod-p cohomology
induces a functor
$$
\CF_p(G)^{\op} \lra \CF(H^*(BG)),
$$
which turns out to be a an equivalence of categories \cite{dwcendec}.
Moreover, for a subgroup $i_e : E\subset G$, we have
$H^*(BC_G(E)\cong T_E(H^*(BG),H^*(i_E))$ \cite{lannes}.
Hence, identifying both categories,
there exists a natural equivalence
$H^*(F_G) \larrow{\cong} \Phi_{G}$, where
$\Phi_G\letbe \Phi_{H^*(BG)} : \CF(H^*(BG)) \lra H^*(BG)\sdcu$ denotes
the functor constructed
in the previous section in the case $M=A=H^*(BG)$.
In particular, this implies that
$L^i(H^*(BG))=0$ for all $i\geq -1$.
and that $\depth H^*(BG) \leq \depth H^*(BC_G(E)$
(Theorem \ref{depthincrease}).

The inclusion $C_G(E) \subset G$ induces a $\CK$-map
$H^*(BG) \lra H^*(BC_G(E))$ and makes the target into
a finitely generated $H^*(BG)$-module \cite{quillen}.
Hence,
$\depth_{H^*(BG)} H^*(BC_G(E))=\depth_{H^*(BC_G(E))} H^*(BC_G(E))$
\cite{serre}).

Applying Corollary \ref{litrivial} and
fitting all the above arguments together, this proves the following
statement.

\begin{thm} \label{maingroupcohomology}
Let $G$ be a compact Lie group.
Then,
$$
\depth H^*(BG)=\min\{\depth H^*(BC_G(E)) : E\in \CF_p(G)\}.
$$
\end{thm}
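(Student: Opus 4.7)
The plan is to prove the equality by establishing the two inequalities separately, using the translation of the topological data into the algebraic framework of Section \ref{algcendec}. Via the equivalence $\CF_p(G)^{\op} \simeq \CF(H^*(BG))$ and the identification $H^*(F_G) \cong \Phi_G$ (where $\Phi_G = \Phi_{H^*(BG)}$), the functor value at an object corresponding to $E \in \CF_p(G)$ is precisely $H^*(BC_G(E))$. Moreover, as already noted in this section, the inclusion $C_G(E) \subset G$ turns $H^*(BC_G(E))$ into a finitely generated $H^*(BG)$-module (Quillen), so by the standard invariance of depth under finite extensions \cite{serre}, $\depth_{H^*(BG)} H^*(BC_G(E)) = \depth_{H^*(BC_G(E))} H^*(BC_G(E))$. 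Thus the minimum on the right-hand side of the stated equality is unambiguous, and the theorem reduces to the purely algebraic inequalities for the functor $\Phi_G$.

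For the upper bound $\depth H^*(BG) \leq \depth H^*(BC_G(E))$ for every $E \in \CF_p(G)$, I would invoke Theorem \ref{depthincrease} with $A = M = H^*(BG)$ and $\phi = H^*(Bi_E) : H^*(BG) \to H^E$. Since $H^*(BC_G(E)) \cong T_E(H^*(BG), H^*(Bi_E)) = M(\phi)$, the theorem gives $\depth H^*(BG) \leq \depth H^*(BC_G(E))$ directly. Taking the minimum over $\CF_p(G)$ yields one direction.

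For the reverse inequality, I would apply Corollary \ref{litrivial} to the functor $\Phi_G$ on $\CF(H^*(BG))$. The hypothesis to check is that $L^i(H^*(BG)) = 0$ for all $i \geq -1$. This is exactly the Jackowski–McClure vanishing result recalled at the start of the section: the natural map $H^*(BG) \to \lim_{\CF_p(G)} H^*(F_G)$ is an isomorphism (so $L^{-1} = L^0 = 0$) and the higher inverse limits vanish (so $L^i = 0$ for $i \geq 1$); under the equivalence of categories these statements transport to the required vanishing for $\Phi_G$. Corollary \ref{litrivial} then gives $\depth H^*(BG) \geq \min_{(E,\phi)} \depth \Phi_G(E,\phi) = \min_{E \in \CF_p(G)} \depth H^*(BC_G(E))$, which completes the proof.

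The argument is essentially an assembly: the two nontrivial inputs (Theorem \ref{depthincrease} and the key lemma) are already in hand, and the topology supplies precisely the vanishing needed to feed Corollary \ref{litrivial}. The only genuine point of care is the bookkeeping that lets the minimum over the Quillen category be computed as a minimum over algebraic objects $(E,\phi) \in \CF(H^*(BG))$, and that the depth of $H^*(BC_G(E))$ is the same whether measured over $H^*(BG)$ or over itself; both of these are handled by the equivalence of categories and the Quillen finiteness theorem.
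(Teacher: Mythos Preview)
Your proposal is correct and follows essentially the same route as the paper: translate to the algebraic setting via the equivalence $\CF_p(G)^{\op}\simeq \CF(H^*(BG))$ and $H^*(F_G)\cong \Phi_G$, use Theorem~\ref{depthincrease} together with Quillen finiteness and the Serre depth comparison for one inequality, and feed the Jackowski--McClure vanishing into Corollary~\ref{litrivial} for the other. The paper presents these ingredients in the discussion preceding the theorem and then invokes Corollary~\ref{litrivial}; your write-up simply makes the two inequalities more explicit.
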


If $E\cong E'\times E''$ then we have $E\subset C_G(E')$ and
$C_G(E)=C_{C_G(E')}(E)$. Hence, in the above theorem, we only have to take the minimum over all 1-dimensional elementary abelian $p$-subgroups.

\begin{rem}
Results of the Jackowski-McClure type do exist for homotopy theoretic
versions of groups. In particular, Theorem
\ref{maingroupcohomology} also holds for $p$-compact groups and
p-local finite groups. For definitions and concepts of these notions
see \cite{dwannals} and \cite{dwcenter} respectively \cite{blo1}.
\end{rem}

\bigskip

\section{Depth and polynomial invariants} \label{polyinv}

We use the same notation as in the introduction,
$V$ denotes a finite dimensional  $\fp$-vector space,
$G\lra \Gl(V)$ is a (faithful) representation,
$\fp[V]$ denotes the ring of polynomial functions
on $V$ respectively the symmetric
algebra on the dual
of $V$, $\fp[V]^G\subset \fp[V]$
the subalgebra of polynomial invariants,
$S$ the collection of all point wise stabilizer subgroups
of non-trivial subspaces of $V$ whose order is divisible by $p$,
$\CO_S(G) \subset \CO(G)$
the full subcategory of the orbit category associated to the collection $S$
and $\Psi_G : \CO_S \lra \fp[V]^G\!-\!\!\mod$ the functor given by
$G/H\mapsto \fp[V]^H$.  For a linear subspace $U\subset V$, we
denote by $G_U$ the point wise stabilizer of $U$.

\begin{thm} \label{maininv}
If $p$ divides the order of $G$ and $G\lra \Gl(V)$ is a faithful
representation, then
$$
\depth \fp[V]^G = \min\{\depth \fpv^{G_U} : 0\neq U \subset V\}.
$$
\end{thm}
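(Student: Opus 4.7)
My plan is to combine Corollary \ref{litrivial}, applied to the functor $\Psi_G$ on $\CO_S(G)$, with the classical subgroup inequality for the depth of rings of polynomial invariants.

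First, I would invoke the main result of \cite{nodecrep} recalled in Example \ref{example3}: when $p$ divides $|G|$, the natural transformation $1_{\fp[V]^G}\to \Psi_G$ induces an isomorphism $\fp[V]^G\larrow{\cong}\lim_{\CO_S(G)} \Psi_G$ and all higher derived inverse limits vanish. In the language of Section \ref{keylemma} this is precisely the statement
$$L^i(\fp[V]^G,\Psi_G)=0 \quad \text{for all } i\geq -1.$$
Since each value $\Psi_G(G/H)=\fp[V]^H$ is a finitely generated $\fp[V]^G$-module by classical invariant theory, Corollary \ref{litrivial} applies verbatim and yields the lower bound
$$\depth \fp[V]^G\,\geq\,\min\{\depth \fp[V]^H : H\in S\}.$$

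For the matching upper bound I would use the inequality $\depth \fp[V]^G\leq \depth \fp[V]^H$, valid for any subgroup $H\leq G$; this is the Kemper-Smith result cited in Example \ref{example3} from \cite{kemper} and \cite{nesm}. Specialising to $H=G_U$ gives $\depth \fp[V]^G\leq\min\{\depth \fp[V]^{G_U}: 0\neq U\subset V\}$. To bring the two bounds together I then check that enlarging the index set from $S$ to the collection of all non-zero subspaces does not change the value of the minimum. If $G_U\notin S$, then either $G_U=1$ (possible since the representation is faithful) or $p\nmid |G_U|$, and in both cases $\fp[V]^{G_U}$ is Cohen-Macaulay of depth $\dim V$ by \cite{nesm}; these new entries sit at the top of the range and therefore cannot lower the minimum, provided $S$ itself is non-empty. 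Non-emptiness of $S$ follows because $p\mid |G|$ supplies an element $g\in G$ of order $p$; since $g-1$ acts nilpotently on the $\fp$-vector space $V$, the fixed subspace $V^{\langle g\rangle}$ is non-trivial and its point-wise stabilizer contains $g$, so $p$ divides its order.

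I do not expect a genuine obstacle here: the substantive work (the vanishing of higher limits of $\Psi_G$ and the Kemper-Smith depth inequality) has already been done in the cited literature, and Theorem \ref{maininv} becomes a formal consequence of Corollary \ref{litrivial} together with this book-keeping about subspaces whose point-wise stabilizer has order coprime to $p$. The only subtle point in the write-up is making sure that the minimum taken in the statement really coincides with the one naturally produced by the decomposition on $\CO_S(G)$.
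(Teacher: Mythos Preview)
Your argument is correct and its lower-bound half coincides with the paper's: both invoke the vanishing result from \cite{nodecrep} to feed Corollary~\ref{litrivial}. The difference lies in the upper bound $\depth \fpv^G\leq \depth \fpv^{G_U}$. The paper does not quote Kemper--Smith; instead it equips $\fpv^G$ with its $\CK$-structure, identifies $\fpv^{G_U}$ with the component $T_{E^U}(\fpv^G,\phi)$ of Lannes' $T$-functor, and then applies its own Theorem~\ref{depthincrease}. This gives the inequality uniformly for \emph{every} non-trivial $U$, so the chain
\[
\depth \fpv^G \leq \min_{0\neq U}\depth \fpv^{G_U}\leq \min_{H\in S}\depth \fpv^H\leq \depth \fpv^G
\]
closes up without any separate analysis of those $G_U$ with $p\nmid |G_U|$. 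Your route trades the $T$-functor setup for an extra citation and the Cohen--Macaulay bookkeeping paragraph; both are fine, but the paper's version keeps the proof internal to the machinery developed in Section~\ref{depthtfunctor}.

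One small correction: the inequality $\depth \fpv^G\leq \depth \fpv^H$ is \emph{not} known for arbitrary subgroups $H\leq G$; the Kemper--Smith result as cited in Example~\ref{example3} is stated only for $H\in S$. Your argument never actually uses the general claim---you immediately specialise to $H=G_U$ and handle $G_U\notin S$ via Cohen--Macaulayness---so the logic survives, but the sentence ``valid for any subgroup $H\leq G$'' should be weakened to ``valid for $H\in S$''.
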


In particular, this says that $\depth \fpv^G \leq \depth \fpv^{G_U}$ for all non-trivial subspaces $U\subset V$. Since, for
$U\subset W \subset V$,
we have $G_U=(G_W)_U$,
we get away in the above statement by
taking the minimum over all 1-dimensional subspaces.

\begin{proof}
The proof is based on Lemma \ref{key} and Theorem \ref{depthincrease}.
Before we can apply these results, we have to recall the necessary setting.
We make $\fpv$ into a graded  $\fp$-algebra by giving  all polynomial
generators the degree 2. Then, there exists a unique $\CA$-action on $\fpv$
and $\fpv$ becomes an object of $\CK$. Moreover, the $\CA$-action is inherited to the ring of polynomial invariants $\fpv^G$.
If $U\subset V$, the composition
$\phi : \fpv^G \lra \fpv \lra \fp[U]\lra H^U$
is a $\CK$ map and $T_U(\fpv^G,\phi)\cong \fpv^{G_U}$. For example, all these
constructions and claims can be found in
\cite{nesm}. In particular, this shows that
$\depth \fpv^G\leq \depth \fpv^{G_U}$ for all non-trivial subspaces
$U\subset V$ (Theorem \ref{depthincrease}).

By \cite{nodecrep} we have
$$
\lim{}^i_{\CO_S(G)}\, \Phi_G \cong
\begin{cases}
\fpv^G & \text{ for } i=0\\
0 & \text{ for } i\geq 1.
\end{cases}
$$
Since $\depth \fpv^G\leq \depth \Phi_G(G/H)=\fpv^{H}$  for all objects
$G/H\in \CO_S(G)$, an application of Corollary \ref{litrivial} finishes the proof.
\end{proof}

\bigskip

\section{Stanley-Reisner algebras} \label{stanreis}

An abstract simplicial complex $K$ with $m$-vertices given by the
set $V\letbe \{1,...,m\}$ consists of a finite set
$K\letbe \{\sigma_1,...,\sigma_r\}$ of subsets of $V$, which  is closed
with respect to  formation of subsets. The subsets $\sigma_i \subset V$ are called the faces of $K$.
The dimension
of $K$ is denoted by $\Dim K$ and $\Dim K = n-1$, if every
face $\sigma$ of $K$ has
order $\sharp\sigma\leq n$ and there exists a maximal face $\mu$ of
order $\sharp\mu=n$. We consider the empty set $\emptyset$
as a face of $K$.

For a field $\bF$ we denote by $\bF(K)$
the associated face ring or
Stanley-Reisner algebra
of $K$ over $\bF$. It is the quotient
$\bF[V]/(v_\sigma : \sigma \not\in K)$, where
$\bF[V]\letbe \bF[v_1,...,v_m]$ is a polynomial algebra
on $m$-generators
and $v_\sigma\letbe  \prod_{j\in \sigma} v_j$.
We can think of $\bF(K)$ as a
graded object. It will be convenient to choose
the topological grading and  give
the generators of $\bF(K)$ and $\bF[V]$ the degree 2.

Each abstract simplicial complex $K$ has a geometric realization,
denoted by $|K|$.
%
%
We define the cohomology groups $H^*(K)$ of $K$ as
the cohomology groups $H^*(|K|)$ of the
topological realization. And $\widetilde H^*(K)$ will denote the
reduced cohomology.

For a face $\sigma\in K$, the star $\st_K(\sigma)$ and the link
$\link_K(\sigma)$ of $\sigma$
are defined as the simplicial subcomplexes
$\st_K(\sigma)\letbe \{\tau\in K: \sigma\cup \tau \in K\}$ and
$\link_K(\sigma)\letbe \{\tau\in K :
\sigma\cup\tau \in K, \sigma\cap \tau=\emptyset\}$.

\begin{thm} \label{mainsr}
Let $K$ be an abstract finite simplicial complex. Let
$r\leq \Dim K+1$. Then the following conditions are equivalent:
\nz
(i) $\depth \bF(K) \geq r$.
\nz
(ii) For all
faces $\sigma\in K$ we have $\widetilde H^i(\link_K(\sigma);\bF)=0$
for $i\leq r-\sharp\sigma-2$.
\nz
(iii) $\widetilde H^i(K;\bF)=0=H^i(|K|,|K|-x;\bF)$
for $i\leq r-2$ and for all
points $x\in |K|$.
\end{thm}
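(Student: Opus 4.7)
My plan is to treat the topological equivalence (ii)$\iff$(iii) separately from the algebraic equivalence (i)$\iff$(ii), the latter being obtained by feeding the functor $\Phi_K$ of Example~\ref{ example1} into Lemma~\ref{key} and inducting on $\Dim K$.

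For (ii)$\iff$(iii), I argue locally. If $x$ lies in the interior of a face $\sigma\in K$ with $k=\sharp\sigma$, then by simplicial excision $x$ has a neighborhood in $|K|$ homeomorphic to $(|\sigma|)^{\circ}\times c(|\link_K\sigma|)$, and the cone relation $H^*(c(L),c(L)-\mathrm{pt};\bF)\cong \widetilde H^{*-1}(L;\bF)$ gives
\[
H^i(|K|,|K|-x;\bF)\cong \widetilde H^{i-k}(\link_K\sigma;\bF).
\]
Accounting for the case $\sigma=\emptyset$ (where $\link_K\emptyset=K$), the family of vanishings in (ii) corresponds exactly to the pair of vanishings in (iii).

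For (i)$\iff$(ii) I apply Lemma~\ref{key} to $R=M=\bF(K)$, $\CC=\catkx$, $\Phi=\Phi_K$, with the natural transformation $1_{\bF(K)}\lra \Phi_K$ induced by the quotient maps. Two preparatory computations drive the proof. Since $\st_K\sigma=\sigma*\link_K\sigma$ as simplicial complexes, the Stanley-Reisner functor is multiplicative under joins on disjoint vertex sets, so
\[
\Phi_K(\sigma)\cong \bF[v_i:i\in\sigma]\otimes_\bF \bF(\link_K\sigma),
\]
and additivity of depth for graded tensor products over $\bF$ yields $\depth\Phi_K(\sigma)=\sharp\sigma+\depth\bF(\link_K\sigma)$. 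Next, I exploit the $\bN^V$-multigrading of $\Phi_K$: in multidegree $\alpha$ with $\tau=\mathrm{supp}(\alpha)\in K$, the summand $\Phi_K^{\alpha}$ is constant $\bF$ on the downward-closed subposet $\{\sigma\in\catkx:\sigma\cup\tau\in K\}$ of $\catkx$ (the face poset of $\st_K\tau$) and zero elsewhere. Since $|\st_K\tau|$ is contractible, no such $\alpha\ne 0$ contributes to higher limits; in multidegree $0$ the functor is constant $\bF$ on all of $\catkx$, whose nerve realizes $|K|$. Hence $L^{-1}=0$ and $L^i\cong \widetilde H^i(K;\bF)$ for $i\geq 0$, concentrated in internal degree $0$ and therefore annihilated by $m_{\bF(K)}$. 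In particular the sequence $L^i$, $i\geq -1$, is almost trivial.

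To finish (i)$\iff$(ii) I induct on $\Dim K$. Assuming the theorem for complexes of smaller dimension, the identity $\link_{\link_K\sigma}(\tau)=\link_K(\sigma\cup\tau)$ shows that (ii)$_r$ for $K$ at all non-empty faces is equivalent to: for each non-empty $\sigma\in K$, (ii)$_{r-\sharp\sigma}$ holds for the smaller-dimensional $\link_K\sigma$; by induction this is equivalent to $\depth\bF(\link_K\sigma)\geq r-\sharp\sigma$ for every such $\sigma$, i.e.\ to the hypothesis $\depth\Phi_K(\sigma)\geq r$ required by Lemma~\ref{key}. Granting this hypothesis, Lemma~\ref{key} then produces
\[
\depth\bF(K)\geq r \iff L^i=0\text{ for }i\leq r-2 \iff \widetilde H^i(K;\bF)=0\text{ for }i\leq r-2,
\]
and the right-hand vanishing is precisely (ii)$_r$ at $\sigma=\emptyset$, giving (ii)$\Rightarrow$(i). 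For the converse direction, when (ii) fails at some non-empty $\sigma$ I would use the induction to locate a vertex $v$ with $\depth\bF(\link_Kv)<r-1$ and then transfer this depth deficiency back to $\bF(K)$ itself via the localization $\bF(K)[v^{-1}]\cong \bF[v,v^{-1}]\otimes\bF(\link_Kv)$ (equivalently, via Hochster's multigraded local-cohomology formula). I expect this converse transfer to be the main obstacle: the identification $L^i\cong \widetilde H^i(K;\bF)$ is routine multigraded bookkeeping, but converting a link depth failure into a global depth failure for $\bF(K)$, without making circular use of the theorem under construction, is the delicate point.
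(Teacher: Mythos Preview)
Your overall architecture matches the paper's: cite Munkres for (ii)$\iff$(iii), feed $\Phi_K$ into Lemma~\ref{key}, identify $L^i$ with $\widetilde H^i(K;\bF)$, and induct on $\Dim K$ using $\link_{\link_K\sigma}(\tau)=\link_K(\sigma\cup\tau)$. Your multigraded computation of $L^i$ is exactly the content of what the paper imports as Theorem~\ref{srdec}, so that part is fine and even a bit more self-contained than the paper.

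The gap you flag is genuine, and it is precisely where your proposal and the paper diverge. To run Lemma~\ref{key} in the direction (i)$\Rightarrow$(ii) you must first verify its standing hypothesis $\depth\Phi_K(\sigma)\geq r$ for every nonempty $\sigma$, starting only from $\depth\bF(K)\geq r$; equivalently you need $\depth\bF(\st_K\sigma)\geq\depth\bF(K)$. The paper isolates this as Proposition~\ref{depthstarinherited} and proves it \emph{not} by localization or Hochster but by recognizing, for $\bF=\fp$, that $\bF(\st_K\sigma)\cong T_{E^\sigma}(\bF(K),\phi_\sigma)$ is a component of Lannes' $T$-functor applied to $\bF(K)$, and then invoking Theorem~\ref{depthincrease} (the $T$-functor can only raise depth). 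General fields are handled by the base-change identities $\depth\bF(K)=\depth\fp(K)$ for $\mathrm{char}\,\bF=p$ and $\depth\bQ(K)=\max_p\depth\fp(K)$.

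Your two suggested alternatives both have problems. Invoking Hochster's formula is not circular in a logical sense, but it already yields the full equivalence (i)$\iff$(ii) in one stroke, so routing through Lemma~\ref{key} would become decorative rather than a proof. The localization route $\bF(K)[v^{-1}]\cong\bF[v^{\pm1}]\otimes\bF(\link_Kv)$ is correct as an identity but does not by itself bound $\depth\bF(K)$ above by $1+\depth\bF(\link_Kv)$: the vertex $v$ is typically a zero-divisor in $\bF(K)$, inverting it destroys the graded-local structure (the image of $m_{\bF(K)}$ is the unit ideal, so naive local-cohomology comparison collapses), and the short exact sequence $0\to\bF(\st_Kv)[-2]\to\bF(K)\to\bF(\mathrm{del}_Kv)\to 0$ that one extracts from multiplication by $v$ drags in the deletion, over which you have no inductive control. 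So as written the converse transfer is incomplete; the paper's $T$-functor inequality is exactly the missing ingredient.
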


The equivalence of the last two conditions was shown
by Munkres \cite{munkres}.
In fact, for
each face $\sigma\in K$ and each inner point
$x\in \sigma$, he showed that
$\widetilde H^{i-\sharp\sigma}(\link_K(\sigma))\cong H^i(|K|,|K|-x)$.
This shows  that the depth condition
only depends on the topological structure
of $|K|$,
and not on the
simplicial structure of $K$.

Since the Krull dimension of $\bF(K)$ equals $\Dim K +1 =n$, the face ring
$\bF(K)$ is Cohen-Macaulay if and only if $\depth \bF(K)=n$.
This shows that the above theorem generalizes Reisner's characterization
of Cohen-Macaulay face rings \cite{reisner}.

The proof needs some preparation.
The poset structure of $K$, given by the subset relation between the
faces, gives rise to a category denoted by $\catk$, and
the non-empty faces
of $K$ to the full subcategory $\catkx \subset \catk$.
There exists a functor $\Phi_K :\catkx \lra \Alg_{\bF}$
taking values in the category of $\bF$-algebras.
For a face $\sigma\in K$ we define
$\Phi_K(\sigma)\letbe \bF(\st(\sigma))$. For an inclusion $\sigma \subset \tau$, we have $\st_K(\tau) \subset \st_K(\sigma)$, which
defines the map $\bF(\st_K(\sigma)) \lra \bF(\st(\tau))$.

The following theorem is proved in \cite{notopint}:

\begin{thm} \label{srdec}
Let $K$ be an abstract finite simplicial complex.
Then,
$$
\lim{}^i_{\catkx}\, \Phi_K \cong
\begin{cases}
\bF(K) \oplus \widetilde H^0(K;\bF) & \text{ for } i=0 \\
H^i(K;\bF) & \text{ for } i\geq 1.
\end{cases}
$$
\end{thm}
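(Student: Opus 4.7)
The plan is to exploit the monomial decomposition of the Stanley-Reisner algebra to reduce the higher limits to cohomology of subposets of $K^\times$. Write $V_{\tau_0} \subset \bF[V]$ for the $\bF$-span of monomials whose support is exactly $\tau_0 \subseteq V$, so $V_\emptyset = \bF$ and $\bF(K) = \bigoplus_{\tau_0 \in K} V_{\tau_0}$ (a monomial basis for the Stanley-Reisner algebra). Since $\bF(\st_K(\sigma))$ has as $\bF$-basis the monomials whose support lies in $\st_K(\sigma)$, grouping by support yields a direct-sum decomposition of functors
$$
\Phi_K \;=\; \bigoplus_{\tau_0 \in K}\; \Phi_K^{\tau_0}, \qquad
\Phi_K^{\tau_0}(\sigma) =
\begin{cases} V_{\tau_0} & \text{if } \sigma\cup\tau_0 \in K, \\ 0 & \text{otherwise}, \end{cases}
$$
with restriction maps acting as the identity on $V_{\tau_0}$ when both source and target are nonzero and as zero otherwise. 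Since $\lim^*$ commutes with the direct sum (grading-by-grading all pieces are finite), it suffices to compute each summand.

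Next I observe that the subposet $P_{\tau_0} := \st_K(\tau_0)^\times = \{\sigma \in K^\times : \sigma\cup\tau_0 \in K\}$ is downward closed in $K^\times$: if $\sigma\cup\tau_0 \in K$ and $\sigma' \subseteq \sigma$, then $\sigma'\cup\tau_0 \subseteq \sigma\cup\tau_0 \in K$. Consequently, for any chain $\sigma_0 \subsetneq \cdots \subsetneq \sigma_n$ in $K^\times$ the term $\Phi_K^{\tau_0}(\sigma_n)$ in the cochain complex of Section \ref{keylemma} vanishes unless every $\sigma_i$ lies in $P_{\tau_0}$. The cochain complex therefore coincides termwise, with unchanged differentials, with the cochain complex computing $\lim^*_{P_{\tau_0}}$ of the constant functor $\underline{V_{\tau_0}}$. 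By the standard identification of derived limits of a constant functor with simplicial cohomology of the classifying space of the indexing poset (which for the face poset of a simplicial complex is its barycentric subdivision), this yields
$$
\lim{}^i_{\catkx}\, \Phi_K^{\tau_0} \;\cong\; V_{\tau_0}\otimes_\bF H^i(|\st_K(\tau_0)|;\bF).
$$

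For $\tau_0 \neq \emptyset$, the star decomposes as the simplicial join $\st_K(\tau_0) = \tau_0 * \link_K(\tau_0)$ (equivalently, $|\st_K(\tau_0)|$ is star-convex about the barycenter of $|\tau_0|$) and is therefore contractible, so only $\lim^0 \Phi_K^{\tau_0} \cong V_{\tau_0}$ survives. For $\tau_0 = \emptyset$ one has $\st_K(\emptyset) = K$ and $V_\emptyset = \bF$, contributing $H^i(K;\bF)$ in every degree. Summing the pieces,
$$
\lim{}^0 \Phi_K \;\cong\; \Bigl(\bigoplus_{\emptyset\neq\tau_0 \in K} V_{\tau_0}\Bigr) \oplus H^0(K;\bF) \;=\; \bF(K)^{>0} \oplus \bF \oplus \widetilde H^0(K;\bF) \;=\; \bF(K) \oplus \widetilde H^0(K;\bF),
$$
and $\lim^i \Phi_K \cong H^i(K;\bF)$ for $i\geq 1$, which is exactly the statement.

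The main obstacle is the bookkeeping in the first step: one must verify that the pointwise monomial decomposition of $\bF(\st_K(\sigma))$ is genuinely respected by the structure maps $\bF(\st_K(\sigma)) \to \bF(\st_K(\tau))$, so that each $\Phi_K^{\tau_0}$ is a bona fide subfunctor. Concretely, the structure map sends a monomial with support $\tau_0$ to the same monomial when $\tau_0 \in \st_K(\tau)$ and to zero otherwise, which precisely matches the description of $\Phi_K^{\tau_0}$ above. Once this decomposition is justified, the rest is a mechanical application of the classical identification of higher limits of a constant functor with nerve cohomology, combined with contractibility of stars.
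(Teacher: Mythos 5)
Your argument is correct, and it is genuinely different from what the paper does: the paper gives no proof of Theorem \ref{srdec} at all, but cites it from \cite{notopint}, where it is obtained topologically from the Davis--Januszkiewicz-type spaces $c(K)$ and the cofibration $|K| \to \hocolim_{\catkxop} F_K \to c(K)$ described in Example \ref{ example1}. Your route is purely algebraic: the monomial decomposition $\Phi_K = \bigoplus_{\tau_0\in K}\Phi_K^{\tau_0}$ is legitimate (the structure maps $\bF(\st_K(\sigma))\to\bF(\st_K(\tau))$ are the natural quotient maps, hence identity-or-zero on each monomial according to whether its support lies in $\st_K(\tau)$, so each $\Phi_K^{\tau_0}$ is a subfunctor); the support poset $\st_K(\tau_0)^\times$ is indeed downward closed, so the cochain complex of Section \ref{keylemma} for $\Phi_K^{\tau_0}$ literally equals that of the constant functor on $\st_K(\tau_0)^\times$; the identification of derived limits of a constant functor over a poset with the cohomology of its order complex is standard; and $\st_K(\tau_0)=\overline{\tau_0}*\link_K(\tau_0)$ is a cone for $\tau_0\neq\emptyset$, so only the $\tau_0=\emptyset$ summand contributes in positive degrees. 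The bookkeeping at $i=0$ ($\bF(K)^{+}\oplus H^0(K;\bF)=\bF(K)\oplus\widetilde H^0(K;\bF)$) is also right. What your approach buys is a short, self-contained, characteristic-free combinatorial proof (essentially the mechanism behind Hochster-type formulas, in the spirit of \cite{brun}); what the topological approach buys is the realization of the whole diagram by spaces, which the paper needs elsewhere anyway. Two cosmetic caveats: write the join as $\overline{\tau_0}*\link_K(\tau_0)$ with the full simplex on $\tau_0$, and note that the degenerate case where $K^\times$ is empty must be excluded or treated separately.
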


\begin{prop} \label{depthstarinherited}
Let $K$ be an
abstract finite simplicial complex. For all faces
$\sigma \in K$ we have
$\depth \bF( \link_K(\sigma)) + \sharp \sigma
= \depth \bF(\st_K(\sigma)
\geq \depth \bF(K)$.
\end{prop}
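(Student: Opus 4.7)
The plan is to establish the equality and the inequality separately.

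\emph{Equality.} I would use the simplicial join decomposition $\st_K(\sigma) = \sigma * \link_K(\sigma)$, viewing $\sigma$ as the full simplex on its own vertex set. The face ring of a join is the tensor product, $\bF(\sigma * L) \cong \bF(\sigma) \otimes_\bF \bF(L)$, and $\bF(\sigma)$ is a polynomial algebra on $\sharp\sigma$ generators. Since adjoining $\sharp\sigma$ free polynomial variables increases depth by exactly $\sharp\sigma$, the equality drops out.

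\emph{Inequality: reduction.} I would argue by induction on $\sharp\sigma$, the base $\sigma = \emptyset$ being trivial. For $\sharp\sigma \geq 1$, pick $v \in \sigma$, set $\sigma' = \sigma \setminus \{v\}$, and note the definitional identity $\st_K(\sigma) = \st_{\st_K(\sigma')}(v)$. The inductive hypothesis gives $\depth \bF(\st_K(\sigma')) \geq \depth \bF(K)$, so the step closes provided the single-vertex claim $\depth \bF(\st_L(v)) \geq \depth \bF(L)$ holds for every simplicial complex $L$ and vertex $v \in L$.

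\emph{Single-vertex case.} Set $R = \bF(L)$, and write $\operatorname{del}_L(v) \letbe \{\tau \in L : v \notin \tau\}$ for the deletion, so that $\bF(\operatorname{del}_L(v)) = R/vR$. A short monomial check shows that $v \cdot v_\tau = 0$ in $R$ precisely when $\tau \in L$ with $\tau \cup \{v\} \notin L$; hence $\ann_R(v)$ equals the kernel of the surjection $R \twoheadrightarrow \bF(\st_L(v))$. Multiplication by $v$ therefore yields an $R$-module isomorphism $vR \cong R/\ann_R(v) \cong \bF(\st_L(v))$ up to a degree shift, which preserves depth. Applying the depth lemma to
\[
0 \to vR \to R \to \bF(\operatorname{del}_L(v)) \to 0
\]
gives $\depth(vR) \geq \min(\depth R,\, \depth \bF(\operatorname{del}_L(v)) + 1)$. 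Combining this with the classical inequality $\depth(R/xR) \geq \depth R - 1$ for $x \in m_R$---applied to $x = v$---yields $\depth \bF(\operatorname{del}_L(v)) + 1 \geq \depth R$, and so $\depth \bF(\st_L(v)) = \depth(vR) \geq \depth R$.

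The main obstacle is the single-vertex step. Identifying $vR$ with $\bF(\st_L(v))$ amounts to matching two $\bF$-linear bases, and the core of the argument rests on the classical fact $\depth R/xR \geq \depth R - 1$ for $x$ in the maximal ideal---a standard consequence of a long-exact-sequence analysis of $\Ext^*_R(R/m_R, -)$ (or of local cohomology) applied to $0 \to xR \to R \to R/xR \to 0$.
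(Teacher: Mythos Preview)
Your equality argument is fine and matches the paper's.

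The inequality argument, however, has a genuine gap in the single-vertex step. You invoke as a ``classical inequality'' that $\depth(R/xR)\geq \depth R-1$ for any $x\in m_R$, citing the long exact sequence of $\Ext_R(\bF,-)$ attached to $0\to xR\to R\to R/xR\to 0$. But that long exact sequence does \emph{not} yield this bound: for $i\le \depth R-2$ it gives isomorphisms $\Ext^i_R(\bF,R/xR)\cong\Ext^{i+1}_R(\bF,xR)$, so $\depth(R/xR)\geq \depth R-1$ is \emph{equivalent} to $\depth(xR)\geq\depth R$ (using only that $\Ext^0_R(\bF,xR)\hookrightarrow\Ext^0_R(\bF,R)=0$). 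In your situation $xR=vR\cong\bF(\st_L(v))$, so the ``classical fact'' you appeal to is precisely the statement you are trying to prove. The argument is circular. The inequality $\depth\bF(\operatorname{del}_L(v))\geq\depth\bF(L)-1$ is in fact true, but it does not follow from the depth lemma on this one short exact sequence; it needs independent input (for instance Hochster's formula for the local cohomology of a face ring), and in the context of this paper that input is not available since the paper is aiming to \emph{prove} the Reisner-type criterion.

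For comparison, the paper takes an entirely different route to the inequality. Over $\bF=\fp$ one puts an unstable $\CA$-algebra structure on $\fp(K)$, identifies $\fp(\st_K(\sigma))$ with a component $T_{E^\sigma}(\fp(K),\phi_\sigma)$ of Lannes' $T$-functor applied to $\fp(K)$, and then appeals to Theorem~\ref{depthincrease}, which says that passing to such a component can only increase depth. General fields are handled by base change (positive characteristic) and by observing that $\depth\bQ(K)=\max_p\depth\fp(K)$ (characteristic zero). So the paper's proof of this innocent-looking combinatorial inequality rests on the $T$-functor machinery developed earlier; a purely commutative-algebra proof along your lines would need an additional idea to break the circularity.
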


\begin{proof}
Since $\bF(\st_K(\sigma))\cong \bF[\sigma]\otimes \bF(\link_K(\sigma))$,
the first equation always holds. Here, $\bF[\sigma]\subset \bF[V]$
denotes the polynomial subalgebra generated by all $v_i$ such that
$i\in \sigma$.

For the proof of the second equation
we first assume that $\bF=\fp$. Then, $\fp(K)$ is an unstable algebra
over the Steenrod algebra. For  $\sigma\subset V$ we denote by
$E^\sigma$
the
$\sharp \sigma$-fold product of the cyclic group $\Z/p$ of order $p$.
Then, $\fp[\sigma]\subset H^\sigma\letbe H^{E^\sigma}$ is a subalgebra  in the category $\CK$. In fact, it is the polynomial part of $H^\sigma$.
Let $\phi_\sigma$ denote the composition
$\phi_\sigma: \fp(K) \lra \fp[\sigma]\lra H^\sigma$.
By \cite{nora2}, there exists an isomorphism
$T_{E^\sigma}(\fp(K),\phi_\sigma)\cong \fp(\st_K(\sigma))$.
Now, the second equation follows from
Theorem \ref{depthincrease}.

If $\bF$ is a general field of characteristique $p>0$,
the claim follows from the observation that
$\depth \fp(K)=\depth \fp(K)\otimes_{\fp} \bF=\depth \bF(K)$.

If $\bF=\Q$, then we notice that $\depth \Q(K)$
is the maximum of the set $\{\depth \fp(K) :p \text{ a prime }\}$.
And if $\bF$ is a general field of characteristique 0, then we can argue as in
the case of positive characteristique.
\end{proof}

{\it Proof of Theorem \ref{mainsr}:}
By Theorem \ref{srdec}, the groups $L^i\letbe L^i(\bF(K),\Phi_K)$
are all finite and vanish for $i=-1,$ and $i\geq \dim K +1$.
We can apply Lemma \ref{key}.

If $\depth \bF(K) \geq r$,
then $\depth \bF(\st_K(\sigma))=\depth \bF(\Phi(\sigma))\geq r$
for all $\sigma\in K$
(Proposition \ref{depthstarinherited}), and
$\widetilde H^i(K;\bF)=L^i(\catkx;\Phi_K)=0$ for $i\leq r-2.$
(Lemma \ref{key}).

If $\depth \bF(\st_K(\sigma))\geq r$ for all
$\emptyset \neq\sigma \in K$ and if $\widetilde H^i(K;\bF)=0$
for $i\leq r-2$, Lemma \ref{key} shows that $\depth \bF(K)\geq r$.

Since $\depth \bF(\link_K(\sigma))=\depth \bF(\st_K(\sigma))-\sharp\sigma$
and since $\Dim \link_K(\sigma)<\Dim K$, an induction over the
dimension of the simplicial complexes proves the claim.
\qed

%
%
%
%
%
%
%
%
%
%
%
%
%
%
%
%
%



\end{document}